\newtheorem{Theorem}{Theorem}[section]
\newtheorem{Lemma}[Theorem]{Lemma}
\newtheorem{Corollary}[Theorem]{Corollary}
\newtheorem{Proposition}[Theorem]{Proposition}
\newtheorem{Remark}[Theorem]{Remark}
\newtheorem{Example}[Theorem]{Example}
\newtheorem{Definition}[Theorem]{Definition}
\newtheorem{Question}[Theorem]{Question}
\newtheorem{Conjecture}[Theorem]{Conjecture}
\newcommand{\M}{\mathfrak{m}}
\def\sqr#1#2{{\vcenter{\hrule height.#2pt
\hbox{\vrule width.#2pt height#1pt \kern#1pt \vrule width.#2pt}
\hrule height.#2pt}}}
\def\qed{\hspace*{\fill} $\square$}
\begin{document}

\title[Dao's question on fullness]{Dao's question on the asymptotic behaviour of fullness}

\author{Cleto B.~Miranda-Neto}
\address{Departamento de Matem\'atica, Universidade Federal da
Para\'iba, 58051-900 Jo\~ao Pessoa, PB, Brazil.}
\email{cleto@mat.ufpb.br}

\author{Douglas S.~Queiroz}
\address{Departamento de Matem\'atica, Universidade Federal da
Para\'iba, 58051-900 Jo\~ao Pessoa, PB, Brazil.}
\email{douglas.queiroz@academico.ufpb.br; douglassqueiroz0@gmail.com}

\subjclass[2020]{Primary: 13C05, 13C13; Secondary: 13A30, 13H99}
\keywords{Colon ideal, full ideal, Ratliff-Rush closure, reduction number, Rees algebra}

\begin{abstract} For a local ring $(R, \M)$ of infinite residue field and positive depth, we address the question raised by H. Dao on how to control the asymptotic behaviour of the  $\M$-full, full, and weakly $\M$-full properties of certain ideals (such notions were first investigated by D. Rees and J. Watanabe), by means of bounding appropriate numbers which express such behaviour. We establish upper bounds, and in certain cases even formulas for such invariants. The main tools used in our results are reduction numbers along with Ratliff-Rush closure of ideals, and also the Castelnuovo-Mumford regularity of the Rees algebra of $\M$. 
\end{abstract}

\maketitle

\section{Main preliminaries and motivation: Fullness and Dao's question}

Throughout this note, by a {\it ring} we mean a commutative, Noetherian, unital ring. For a local ring $(R, \M)$, the class of $\M$-full ideals was introduced by D. Rees (unpublished), and its investigation was later formalized and developed by J. Watanabe in the remarkable sequence of papers \cite{Watanabe1, Watanabe2, Watanabe3}. Subsequently, the related classes of full and weakly $\M$-full ideals also appeared (along with other similar classes such as that formed by Burch ideals \cite{Dao2}, completely $\M$-full ideals \cite{Harima-Watanabe}, and basically full ideals \cite{HRR}) and have received attention in the last decade due to their numerous interesting properties; we refer, additionally, to \cite{Celikbas, Dao, GH, Harima-Watanabe2, Rush} and some of their suggested references. There is also an unexpected connection to the classical Zariski-Lipman conjecture (about derivations and smoothness) in the open case of surfaces; see \cite[Section 4]{Cleto}.

Let us invoke the main definitions to be studied here, which are in fact classical notions. Because some authors require the element $x\in \M$ in parts (a) and (b) below to be {\it general}, and although $\M$-fullness and  fullness can still be defined when the residue field is finite (by means of a standard trick; see \cite[Remark 3.3]{Dao}), we shall assume  throughout this paper that the residue class field of any given local ring $(R, \M)$ is infinite, along with the expected condition $x\notin {\M}^2$ in (a) and (b).

\begin{Definition} \rm Let $I$ be an ideal of a local ring $(R, \M)$ with infinite residue field.

\begin{enumerate}
    \item[(a)] $I$ is {\it $\M$-full} if $I\M : x = I$ for some element $x \in \M \setminus {\M}^2$;
    \item[(b)] $I$ is {\it full} if $I : x = I : \M$ for some element $x \in \M \setminus {\M}^2$;
    \item[(c)] $I$ is {\it weakly $\M$-full} if $I\M : \M = I$.
\end{enumerate}
\end{Definition}

\begin{Remark}\label{equiv}\rm Clearly, $\M$-full ideals are weakly $\M$-full. Also, if $I$ is $\M$-primary, then $I$ is weakly $\M$-full if and only if $I$ is basically full in the sense of \cite{HRR}. Moreover, $\M$-full ideals satisfy the so-called {\it Rees property}, and if $(R, \M)$ is a normal domain then any integrally closed ideal is $\M$-full; for details, see \cite{Watanabe1} (also \cite{GH}).
    
\end{Remark}

Next we introduce Dao's invariants (driven by \cite[Theorem 3.1 and Corollary 3.2]{Dao}) which provide a measure of the asymptotic behaviour of the above properties. For a local ring $(R, \M)$, recall $\mathrm{depth}\,R >0$ means that $\M$ contains a regular element. 

\begin{Definition} \rm(\cite[Definition 3.5]{Dao})  Let $(R, \M)$ be a local ring with infinite residue field and $\mathrm{depth}\,R > 0$. Set
\begin{eqnarray}
n_1(I) & = & {\rm min} \{t \geq 0 \mid \mbox{$I\M^{n}$  is $\M$-full  for  all  $n \geq t$\}}; \nonumber \\
n_2(I) & =  & {\rm min} \{t \geq 0 \mid \mbox{$I\M^{n}$  is full  for  all  $n \geq t$\}}; \nonumber \\
n_3(I)  & = & {\rm min} \{t \geq 0 \mid \mbox{$I\M^{n}$  is weakly $\M$-full  for  all  $n \geq t$\}}. \nonumber
\end{eqnarray}
\end{Definition}

Now we recall Dao's question, which motivated us in this note.

\begin{Question}{\rm (}\cite[Question 4.5]{Dao}{\rm )} \label{qdao} \rm Can we find good lower and upper bounds for the $n_{i}(I)$'s? 
\end{Question}

In his paper, Dao also shares his thought that it is not clear how to answer this question even when $R$ is regular and
2-dimensional, in which case he was able to establish the equalities  $n_{1}(I)=n_{2}(I)=n_{3}(I)$; see \cite[Proposition 4.4]{Dao}, which we generalize in Proposition \ref{propdesigualdade}, where we prove that $$n_2(I) \leq n_3(I)= n_1(I)$$ in arbitrary dimension and positive depth (in order to retrieve Dao's proposition, we also apply Lemma \ref{reg-dim2}).

Our main goal in this note is to tackle Dao's problem 
by employing the notions of Ratliff-Rush closure and reduction number of an ideal as well as the Castelnuovo-Mumford regularity of the Rees algebra of $\M$ as key tools. We provide upper bounds for the $n_{i}(I)$'s (or even a formula for $n_{1}(I)$ and $n_{3}(I)$), where in our main results $I$ is assumed to be a reduction of $\M$, while $R$ need not be neither regular nor 2-dimensional. Half of Dao's problem, more precisely the issue of finding good {\it lower} bounds for his numbers, seems to remain untouched -- notice, however, that by virtue of Proposition \ref{propdesigualdade} it suffices to find lower bounds for $n_2(I)$.

In our approach via reduction numbers and the Ratliff-Rush closure operation, our main result is Theorem \ref{cotred}, where we introduce a number $\alpha$ which in fact we prove to coincide with $n_1(I)$ and $n_3(I)$ (hence is an upper bound for $n_2(I)$). A curious consequence described in Corollary \ref{corindep} is the fact that, if $R$ is 1-dimensional and Cohen-Macaulay (and under the assumption that the reduction $I$ is minimal), the number $\alpha$ does not depend on $I$ itself, and in this case agrees more precisely with the (absolute) reduction number of $\M$. In arbitrary dimension, we propose Conjecture \ref{ourconject}.

Furthermore, we approach Question \ref{qdao} via another numerical invariant which is classical in commutative algebra and algebraic geometry. More precisely, our Theorem \ref{cororeg} shows that the Castelnuovo-Mumford regularity of the Rees algebra $\bigoplus_{i\geq 0}{\M}^i$ is an upper bound for the $n_i(I)$'s whenever $I$ is a reduction of $\M$. When $R$ is regular, we prove in Corollary \ref{regular} that all the $n_i(I)$'s are necessarily equal to zero.

In the last section, we furnish three examples which explicitly illustrate our main results. They show, in particular, that the cases $n_2(I)<n_3(I)$ and $n_2(I)=n_3(I)$ are possible, and that in 
 Corollary \ref{corindep} the hypothesis of $I$ being minimal cannot be dropped.

\section{Generalization of a result of H. Dao}\label{main-results}

Our first observation is an easy lemma which, along with Remark \ref{equiv}, extends \cite[Proposition 1.1(d)]{Hong} (where $I$ is assumed to be $\M$-primary). The lemma will be useful to the main result of this section (Proposition \ref{propdesigualdade}).

\begin{Lemma} \label{lemahong}
Let $(R, \M)$ be a local ring with infinite residue field. Let $I$ be an ideal of $R$ and $n \geq 0$.  The following assertions are equivalent:
\begin{itemize}
    \item[(a)] $I\M^{n + 1}$ is full and $I\M^{n}$ is weakly $\M$-full;
    \item[(b)] $I\M^{n}$ is $\M$-full.
\end{itemize}
\end{Lemma}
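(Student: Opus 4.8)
The plan is to set $J = I\M^n$, so that $J\M = I\M^{n+1}$, and to reformulate the asserted equivalence as: \emph{$J$ is $\M$-full if and only if $J\M$ is full and $J$ is weakly $\M$-full.} Everything then reduces to a single containment chain. First I would record that, for every element $x \in \M \setminus \M^2$, one has
\[
J \;\subseteq\; (J\M : \M) \;\subseteq\; (J\M : x),
\]
where the first inclusion holds because $J\cdot\M \subseteq J\M$, and the second because $x \in \M$ forces $(J\M : \M) \subseteq (J\M : x)$. This chain is the only computational input of the argument.

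Next I would reinterpret each of the three properties as an equality between links of this chain. By definition, $J$ is weakly $\M$-full exactly when $J = (J\M : \M)$ (equality at the first link); applying the definition of fullness to the ideal $J\M$, the ideal $J\M$ is full exactly when $(J\M : x) = (J\M : \M)$ for some $x$ (equality at the second link); and $J$ is $\M$-full exactly when $J = (J\M : x)$ for some $x$, which—given the chain—amounts to the collapse of all three terms to $J$. Since the three terms sit in one ascending chain, the two extremes coincide if and only if each consecutive pair coincides, and this is precisely the content of the lemma.

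To make the two implications explicit while handling the existential quantifier on $x$ that is hidden in the definitions, I would argue as follows. For (b) $\Rightarrow$ (a), take an $x$ witnessing $\M$-fullness, so $(J\M : x) = J$; the chain then forces $(J\M : \M) = J$ (giving weak $\M$-fullness, and incidentally re-deriving the implication of Remark \ref{equiv}) and $(J\M : x) = (J\M : \M)$ (so the \emph{same} $x$ witnesses fullness of $J\M$). For (a) $\Rightarrow$ (b), take an $x$ witnessing fullness of $J\M$, so $(J\M : x) = (J\M : \M)$, and combine it with weak $\M$-fullness $(J\M : \M) = J$ to obtain $(J\M : x) = J$, i.e. the same $x$ witnesses $\M$-fullness of $J$.

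The only point requiring care—and the nearest thing to an obstacle in what is otherwise an elementary statement—is ensuring that a single element $x \in \M\setminus\M^2$ can serve as the witness on both sides of each implication. The chain structure makes this automatic: the element $x$ appearing in the colon $(J\M : x)$ is transported across all three conditions without modification, and in each direction it lies in $\M\setminus\M^2$ by the hypothesis it comes from, so it is a legitimate witness for the conclusion. The infinite-residue-field assumption guarantees the existence of suitable (general) elements underlying these definitions, but is not otherwise invoked in the argument.
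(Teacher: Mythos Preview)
Your proof is correct and follows essentially the same approach as the paper's. The only difference is presentational: you abstract to $J = I\M^{n}$ and make the containment chain $J \subseteq (J\M:\M) \subseteq (J\M:x)$ explicit, whereas the paper works directly with $I\M^{n}$ and $I\M^{n+1}$ and leaves the chain implicit; the underlying implications and the handling of the witness $x$ are identical.
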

\begin{proof} First, suppose (a) holds. Since $I\M^{n + 1}$ is full, we can write $I\M^{n + 1} : x = I\M^{n + 1} : \M$
for an element element $x \in \M \setminus {\M}^2$. Now, $I\M^{n}$ being weakly $\M$-full yields $I\M^{n + 1} : \M = I\M^{n}$.
It follows that $I\M^{n + 1} : x = I\M^{n}$, which means $I\M^{n}$ is $\M$-full.

Conversely, assume that $I\M^{n}$ is $\M$-full. Then, $I\M^{n}$ is weakly $\M$-full, and for some element $x \in \M \setminus {\M}^2$ we have $$I\M^{n + 1} : x = I\M^{n} = I\M^{n + 1} : \M,$$
    that is, $I\M^{n + 1}$ is full, as needed. \qed
\end{proof}

\medskip

The main result of this section shows, in particular, that $n_{1}(I)$ and $n_{3}(I)$ coincide for any ideal $I$ in a local ring with infinite residue field and positive depth; as a corollary, we will see that this result generalizes, to such a huge class of rings, a result of Dao established for 2-dimensional regular local rings.

\begin{Proposition}\label{propdesigualdade}
    Let $(R, \M)$ be a local ring with infinite residue field and $\mathrm{depth}\,R > 0$. For any  ideal $I$ of $R$, we have $$n_2(I) \leq n_3(I)= n_1(I).$$ In particular, $\M$-fullness and weak $\M$-fullness share the same asymptotic behaviour.
\end{Proposition}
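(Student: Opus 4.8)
The plan is to reduce the whole statement to the dictionary furnished by Lemma \ref{lemahong}, after first installing one elementary implication not yet recorded in the excerpt: \emph{every $\M$-full ideal is full}. Concretely, I would first prove that if $J\M : x = J$ for some $x\in\M\setminus\M^2$, then $J : x = J : \M$ for that same $x$. The inclusion $J:\M\subseteq J:x$ is immediate because $x\in\M$. For the reverse, given $a$ with $ax\in J$ and any $m\in\M$, I observe $(am)x = m(ax)\in\M J = J\M$, whence $am\in J\M : x = J$; letting $m$ range over $\M$ gives $a\M\subseteq J$, i.e. $a\in J:\M$. Applying this with $J = I\M^{n}$ shows that whenever $I\M^{n}$ is $\M$-full it is also full, so $I\M^{n}$ is full for every $n\ge n_1(I)$, which already yields $n_2(I)\le n_1(I)$.

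Second, I would dispatch one inequality for free from Remark \ref{equiv}: since $\M$-full ideals are weakly $\M$-full, eventual $\M$-fullness implies eventual weak $\M$-fullness, giving $n_3(I)\le n_1(I)$.

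Third -- and this is where Lemma \ref{lemahong} does the real work -- I would rephrase the lemma as a statement about half-lines of exponents. For a fixed $t\ge 0$, ``$I\M^{n}$ is $\M$-full for all $n\ge t$'' is equivalent, via the lemma, to the conjunction ``$I\M^{n}$ is weakly $\M$-full for all $n\ge t$'' and ``$I\M^{m}$ is full for all $m\ge t+1$''. Each of these properties is upward closed in the threshold, and the relevant minima are finite by the positive-depth hypothesis; translating the least admissible $t$ on each side into the three invariants produces the single identity $n_1(I) = \max\{\,n_3(I),\, n_2(I)-1\,\}$.

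Finally I would combine the two relations. Suppose toward a contradiction that $n_1(I) > n_3(I)$; since $n_1(I) = \max\{\,n_3(I),\, n_2(I)-1\,\}$, the maximum must then be realized by the other term, so $n_1(I) = n_2(I)-1$, whence $n_2(I) = n_1(I)+1 > n_1(I)$, contradicting $n_2(I)\le n_1(I)$ from the first step. Hence $n_1(I)\le n_3(I)$, which together with $n_3(I)\le n_1(I)$ gives $n_1(I)=n_3(I)$, and then $n_2(I)\le n_1(I)=n_3(I)$, as claimed. The only genuine obstacle is the circularity latent in Lemma \ref{lemahong}, which characterizes $\M$-fullness partly through fullness one degree higher; the implication ``$\M$-full $\Rightarrow$ full'' is exactly what breaks this circle, since it pins down $n_2(I)\le n_1(I)$ independently and thereby forces the maximum to collapse onto $n_3(I)$. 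Everything else is bookkeeping with the three thresholds.
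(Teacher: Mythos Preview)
Your proof is correct and rests on the same three ingredients as the paper's: Lemma~\ref{lemahong}, the implication $\M$-full $\Rightarrow$ weakly $\M$-full from Remark~\ref{equiv}, and the implication $\M$-full $\Rightarrow$ full (which the paper imports from \cite[Remark~3.6]{Dao} in the form $n_1(I)\ge n_2(I)$, whereas you supply the direct two-line argument). The organization differs: the paper first proves $n_2(I)\le n_3(I)$ by contradiction and then establishes $n_3(I)=n_1(I)$, while you first record $n_2(I)\le n_1(I)$ and $n_3(I)\le n_1(I)$, then repackage Lemma~\ref{lemahong} into the threshold identity $n_1(I)=\max\{n_3(I),\,n_2(I)-1\}$ and let that force $n_1(I)=n_3(I)$. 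Your route is slightly more self-contained and makes the intermediate formula explicit; the paper's is slightly more direct in its two-claim structure. The two arguments are really rearrangements of one another.
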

\begin{proof} For convenience, we divide the proof into two steps.
\begin{itemize}
    \item {\bf Claim 1.} $n_{2}(I)\leq n_{3}(I).$
\smallskip

 To prove this, suppose by way of contradiction that $n_2(I) > n_3(I)$. Set $n = n_1(I)$.  By \cite[Remark 3.6]{Dao}, we can write $n  \geq n_2(I)$. Therefore, $$n - 1 \geq n_3(I)$$ and it follows that $I\M^{n - 1}$ is weakly $\M$-full. Now, as $n\geq n_2(I)$, we get that $I\M^{n}$ is full and hence Lemma \ref{lemahong} ensures that $I\M^{n - 1}$ is $\M$-full, which  contradicts the definition of $n_1(I)$. 

\medskip

\item {\bf Claim 2.} $n_{3}(I) = n_{1}(I).$
         
\smallskip

Keeping in mind the previous step, we can take an integer $$s \geq n_{3}(I) \geq n_2(I).$$ In particular, $I\M^{s + 1}$ is full and $I\M^{s}$ is weakly $\M$-full. According to Lemma \ref{lemahong}, the ideal $I\M^{s}$ must be $\M$-full  for all $s \geq n_{3}(I)$. In other words, $n_{3}(I) \geq n_{1}(I)$. On the other hand, as pointed out in Remark \ref{equiv}, the ideal
$I\M^{s}$ is weakly $\M$-full whenever it is  $\M$-full, i.e.,  $n_{1}(I) \geq n_{3}(I)$, which forces the asserted equality. \qed
    \end{itemize} 
\end{proof}

\begin{Remark} \rm The proposition above gives $$n_{1}(I) = n_3(I) = \mathrm{max}\{n_{2}(I), n_{3}(I)\}$$ and consequently refines \cite[Remark 3.6]{Dao} (but no condition on the residue field is assumed therein).
\end{Remark}

\begin{Lemma}{\rm (}\cite[Proposition 14.1.6]{Swanson-Huneke}, \cite[Proposition 4.1]{Dao}{\rm )} \label{reg-dim2} If $R$ is a $2$-dimensional regular local ring and $I$ is any ideal of $R$, then $$n_1(I) = n_2(I).$$
\end{Lemma}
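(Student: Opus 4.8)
The plan is to deduce the equality from a pointwise statement, namely that in a $2$-dimensional regular local ring fullness and $\M$-fullness coincide for \emph{every} ideal. First I would invoke Proposition \ref{propdesigualdade}: since such a ring has depth $2>0$ and we assume the residue field infinite, it gives $n_2(I)\le n_1(I)$, so only $n_1(I)\le n_2(I)$ remains. If I can show that for an arbitrary ideal $J$ one has ``$J$ full $\Leftrightarrow$ $J$ is $\M$-full,'' then applying this to $J=I\M^n$ makes the two sets $\{n : I\M^n \text{ is full}\}$ and $\{n : I\M^n \text{ is }\M\text{-full}\}$ literally equal, and $n_1(I)=n_2(I)$ follows at once.

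The implication ``$\M$-full $\Rightarrow$ full'' I would treat first, as it needs no hypothesis on $R$. Suppose $J\M:x=J$. The inclusion $J:\M\subseteq J:x$ is automatic because $x\in\M$. For the reverse, take $a\in J:x$, so $ax\in J$; then $(a\M)x=(ax)\M\subseteq J\M$, whence $a\M\subseteq J\M:x=J$, that is, $a\in J:\M$. Thus $J:x=J:\M$ and $J$ is full.

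The substantive half is ``full $\Rightarrow$ $\M$-full,'' and this is exactly where $2$-dimensional regularity is used. Write $\M=(x,y)$; the key structural input is that $\M$ is generated by the two-element regular sequence $x,y$, and that any element of $\M\setminus\M^2$ is part of such a regular system of parameters. I would therefore let $x$ be the element witnessing fullness, so $J:x=J:\M$, and prove $J\M:x=J$. The inclusion $J\subseteq J\M:x$ is clear since $xJ\subseteq\M J$. For the reverse, let $ax\in J\M=xJ+yJ$ and write $ax=xb+yc$ with $b,c\in J$; then $x(a-b)=yc\in(y)$. Since $x$ is a nonzerodivisor modulo $(y)$ (as $R/(y)$ is a one-dimensional domain and $x\notin(y)$), I get $a-b=yd$ for some $d$; substituting and cancelling the nonzerodivisor $y$ gives $c=xd\in J$, so $d\in J:x=J:\M$, hence $dy\in J$ and $a=b+yd\in J$.

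I expect the only delicate points to be bookkeeping rather than genuine obstacles: one must check that the \emph{same} $x$ witnessing fullness may be reused for $\M$-fullness, which is legitimate precisely because in a $2$-dimensional regular local ring every $x\in\M\setminus\M^2$ extends to a regular system of parameters, so no genericity gap appears. The essential use of the hypotheses is the two-generatedness of $\M$ together with $x,y$ being a regular sequence: these are what permit writing $J\M=xJ+yJ$ and performing the two cancellations. In higher dimension or in a non-regular ring $\M$ requires more generators and this collapse fails, which is why only $n_2(I)\le n_1(I)$ survives in general and the equality $n_1(I)=n_2(I)$ is special to the $2$-dimensional regular case.
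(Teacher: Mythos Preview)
The paper does not supply its own proof of this lemma; it simply attributes the result to \cite[Proposition 14.1.6]{Swanson-Huneke} and \cite[Proposition 4.1]{Dao}. Your argument is correct and is essentially the standard one appearing in those references: in a $2$-dimensional regular local ring one has the pointwise equivalence ``$J$ full $\Leftrightarrow$ $J$ is $\M$-full'' for every ideal $J$, and the nontrivial direction uses precisely the fact that $\M=(x,y)$ with $x,y$ a regular sequence, allowing the decomposition $J\M=xJ+yJ$ and the two cancellations you perform.

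One small remark: since you establish both implications pointwise, your initial appeal to Proposition~\ref{propdesigualdade} is unnecessary---the equality of the sets $\{n : I\M^n \text{ is full}\}$ and $\{n : I\M^n \text{ is } \M\text{-full}\}$ already yields $n_1(I)=n_2(I)$ directly, without needing $n_2(I)\le n_1(I)$ as a separate input.
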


As a byproduct, in case the residue field is infinite, our Proposition \ref{propdesigualdade} (with the aid of 
Lemma \ref{reg-dim2}) retrieves the following result of Dao.  

\begin{Corollary}{\rm (}\cite[Proposition 4.4]{Dao}{\rm )} \label{Dao-dim2} If $R$ is a $2$-dimensional regular local ring and $I$ is any ideal of $R$, then $n_1(I) = n_2(I)= n_3(I).$
\end{Corollary}

We close this section with a class of examples where the three invariants agree in arbitrary  dimension (note the zero-dimensional case is trivial since an Artinian regular local ring is just a field).

\begin{Example}\label{examplesec1}\rm Let $R$ be a regular local ring with infinite residue field and $Q$ be any parameter ideal of $R$. By \cite[Corollary 4.2]{Hong}, we have  $n_1(Q) = n_2(Q)$. Now, using our Proposition \ref{propdesigualdade} we obtain $$n_1(Q) = n_2(Q) = n_3(Q).$$ 
\end{Example}

\begin{Question}\rm What can be said about the $n_i(I)$'s if the local ring $R$ is still regular, but with ${\rm dim}\,R\neq 2$, and $I$ is any non-parameter ideal of $R$? In particular, can the inequality $n_3(I)\geq n_2(I)$ (obtained in  Proposition \ref{propdesigualdade}) be strict?
\end{Question}

In case $R$ is non-regular  with ${\rm dim}\,R= 2$, we shall illustrate
in Example \ref{illust} that we can have $n_3(I)> n_2(I)$ at least if $R$ is a rational singularity.  

\section{Answers to Dao's question}

In this section we furnish answers to Question \ref{qdao}, concerning again upper bounds, in the case where $I$ is a reduction of the maximal ideal $\M$ of the local ring $R$. The strategy is to relate the $n_i(I)$'s to two (celebrated) numerical invariants:  reduction number, and the Castelnuovo-Mumford regularity; the former is applied along with the theory of Ratliff-Rush closure of ideals.

\subsection{Approach via reduction numbers} We start with the following 
basic definition.

\begin{Definition}\rm Let $J$ be a proper ideal of a ring $R$. An ideal $I \subset J$ is said to be a \textit{reduction of $J$} if $IJ^{r} = J^{r + 1}$ for some integer $r\geq 0$. Such a reduction $I$ is \textit{minimal} if it is minimal with respect to inclusion. If $I$ is a reduction of $J$, we define the \textit{reduction number of $J$ with respect to $I$} as the number
$${\rm r}_{I}(J) =  \mathrm{min}\,\{m \in \mathbb{N} \, \mid \, IJ^{m} = J^{m + 1}\},$$ and the \textit{reduction number of $J$} as
$${\rm r}(J) =  \mathrm{min}\,\{{\rm r}_{I}(J) \, \mid \, \mbox{$I$ is a minimal reduction of $J$}\}.$$ We say that
${\rm r}(J)$ is \textit{independent} if ${\rm r}(J) = {\rm r}_I(J)$ for every minimal reduction $I$ of $J$.
\end{Definition}

Now recall that, for a given ideal $I$ of a ring $R$, the \textit{Ratliff-Rush closure} $\widetilde{I}$
of $I$ is given by
$$\widetilde{I}  =  \bigcup_{n \geq 1}\,I^{n+1} : I^{n}.$$ This is an ideal of $R$ containing $I$ which in fact refines the integral closure of $I$, so that $\widetilde{I}=I$ whenever $I$ is integrally closed. For details on the theory, we refer to 
\cite{Ratliff-Rush}. 

Now suppose $I$ contains a regular element, i.e., a non-zero-divisor on $R$. Then it is well-known that $\widetilde{I}$ is the largest ideal that shares with $I$ the same sufficiently high powers; hence, $$\widetilde{I^{m}} = I^{m} \quad \mbox{for\, all} \quad m \gg 0.$$ This enables us to consider the following helpful number (inspired by \cite[Proposition 4.2]{Rossi-Swanson}).

\begin{Definition}\rm
If $I$ contains a regular element, we set $$s(I)  =  \mathrm{min}\,\{n \in \mathbb{N} \, \mid \, \widetilde{I^{i}} = I^{i} \ \mathrm{for \ all} \ i \geq n \}.$$
\end{Definition}

Note that, since the equality $\widetilde{I^{i}} = I^{i}$ holds trivially for $i=0$, we have that $s(I)\geq 0$ if and only if $s(I)\geq 1$. Thus we can establish that, throughout the entire paper, $s(I) \geq  1$.

\smallskip

The theorem below is our main result and deals with the case where $I$ is a reduction of $\M$. In particular, it highlights the role played by the numbers ${\rm r}_{I}(\M)$ and $s(\M)$. One of the tools is the following basic lemma, which is a special case of \cite[Proposition 2.2]{Nag}.

\begin{Lemma}\label{chain} Let $R$ be a local ring and $J$ a proper ideal of $R$ containing a regular element. Then, for an integer $n \geq 1$, the ideal $\widetilde{J^{n}}$ is the eventual stable value of the increasing sequence
		$$J^{n + 1} : J  \subseteq  J^{n + 2} : J^{2}  \subseteq  J^{n + 3} : J^{3}  \subseteq  \cdots$$
\end{Lemma}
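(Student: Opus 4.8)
The plan is to check that the displayed chain is genuinely ascending, use Noetherianity to conclude it stabilizes (with stable value equal to its union), and then identify this union with the union defining $\widetilde{J^{n}}$ by a cofinality argument; the only real subtlety is a reindexing.

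First I would establish the inclusions. For $m \geq 1$, if $x \in J^{n+m} : J^{m}$, i.e.\ $xJ^{m} \subseteq J^{n+m}$, then multiplying by $J$ gives $xJ^{m+1} = (xJ^{m})J \subseteq J^{n+m}J = J^{n+m+1}$, so $x \in J^{n+m+1} : J^{m+1}$. Hence the chain $\{J^{n+m} : J^{m}\}_{m \geq 1}$ is ascending, and since $R$ is Noetherian it stabilizes; its eventual stable value coincides with the union $S = \bigcup_{m \geq 1}(J^{n+m} : J^{m})$.

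Next I would unwind the definition of Ratliff--Rush closure applied to the ideal $J^{n}$. By definition,
$$\widetilde{J^{n}} = \bigcup_{k \geq 1}(J^{n})^{k+1} : (J^{n})^{k} = \bigcup_{k \geq 1} J^{n(k+1)} : J^{nk}.$$
The crucial observation is that the $k$-th term on the right is exactly the term of the chain with index $m = nk$, since $J^{n(k+1)} : J^{nk} = J^{n + nk} : J^{nk}$. Thus $\widetilde{J^{n}}$ is the union of the subsequence of the chain indexed by the multiples $n, 2n, 3n, \dots$ of $n$.

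Finally comes the cofinality step, which is also where the one point of care lies. Because the chain is ascending, the subsequence indexed by multiples of $n$ is cofinal in it (for any $m$ there is a multiple $nk \geq m$, whence $J^{n+m} : J^{m} \subseteq J^{n+nk} : J^{nk}$), so the two unions agree:
$$\widetilde{J^{n}} = \bigcup_{k \geq 1} J^{n + nk} : J^{nk} = \bigcup_{m \geq 1} J^{n+m} : J^{m} = S.$$
Therefore $\widetilde{J^{n}}$ is precisely the stable value of the displayed chain. I expect no serious obstacle: the argument uses only Noetherianity together with the definition, the regular-element hypothesis being inherited from the ambient setting of \cite{Nag}. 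The single thing to get right is the reindexing---recognizing that the chain in the statement, which shifts the numerator by the fixed amount $n$ while raising the denominator power by one at each step, refines the sparser chain (using only powers that are multiples of $n$) appearing in the definition of $\widetilde{J^{n}}$, and that ascendingness forces these two chains to share the same union.
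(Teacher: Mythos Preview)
Your argument is correct. The chain is ascending by the multiplication trick you give, Noetherianity forces stabilization at the union $S$, and your reindexing/cofinality step cleanly identifies $S$ with $\bigcup_{k\geq 1}(J^{n})^{k+1}:(J^{n})^{k}=\widetilde{J^{n}}$. You are also right that the regular-element hypothesis plays no role in this computation; it is relevant only for the subsequent properties of Ratliff--Rush closure used elsewhere.

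As for comparison with the paper: there is nothing to compare, because the paper does not prove this lemma at all---it simply records it as a special case of \cite[Proposition 2.2]{Nag}. Your write-up therefore supplies a short, self-contained proof where the paper defers to an external reference, which is a net gain in readability with no cost.
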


\begin{Theorem}\label{cotred} Let $(R, \M)$ be a local ring with infinite residue field and ${\rm depth}\,R>0$, and let $I$ be a reduction of $\M$. Set $\alpha = \mathrm{max}\{{\rm r}_{I}(\M), s(\M) - 1\}$. Then 
	$$n_2(I) \leq n_3(I)  = n_1(I) = \alpha.$$

\end{Theorem}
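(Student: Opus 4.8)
The plan is to invoke Proposition \ref{propdesigualdade}, which already yields $n_2(I) \le n_3(I) = n_1(I)$, so that the whole statement collapses to proving the single equality $n_3(I) = \alpha$. Write $r = {\rm r}_I(\M)$ and $s = s(\M)$, so that $\alpha = \max\{r, s-1\}$; recall $s \ge 1$, and that ${\rm depth}\,R > 0$ guarantees $\M$ contains a regular element, so $s$ is well defined and Lemma \ref{chain} applies to $J = \M$. The crucial preliminary observation is that for $n \ge r$ one has $I\M^n = \M^{n+1}$, whence $I\M^n$ is weakly $\M$-full precisely when $\M^{n+2} : \M = \M^{n+1}$. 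I would then establish $n_3(I) \le \alpha$ and $n_3(I) \ge \alpha$ separately; for the latter it suffices, in view of the former, to exhibit the single value $n = \alpha - 1$ at which $I\M^n$ fails to be weakly $\M$-full.

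For the bound $n_3(I) \le \alpha$, take any $n \ge \alpha$. Then $n \ge r$ and $n + 1 \ge s$, so $\widetilde{\M^{n+1}} = \M^{n+1}$ by definition of $s$. Since the first term of the chain in Lemma \ref{chain} is sandwiched as $\M^{n+1} \subseteq \M^{n+2} : \M \subseteq \widetilde{\M^{n+1}} = \M^{n+1}$, equality holds and $I\M^n$ is weakly $\M$-full; hence $n_3(I) \le \alpha$. If $\alpha = 0$ this already gives $n_3(I) = 0$, so I may assume $\alpha \ge 1$.

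For $n_3(I) \ge \alpha$ I would split according to which term realizes the maximum. If $\alpha = r$ (so $r \ge 1$), then minimality of $r$ gives $I\M^{r-1} \subsetneq \M^r \subseteq \M^{r+1} : \M = (I\M^{r-1})\M : \M$, so $I\M^{r-1}$ is not weakly $\M$-full; this case is immediate and uses nothing about $s$. The remaining case $\alpha = s - 1 > r$ is the main obstacle: here $s - 2 \ge r$, so $I\M^{s-2} = \M^{s-1}$, and I must show $\M^s : \M \ne \M^{s-1}$ knowing only that $\widetilde{\M^{s-1}} \supsetneq \M^{s-1}$ (which holds by minimality of $s$). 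The difficulty is that $\M^s : \M$ is merely the first term of the increasing chain converging to $\widetilde{\M^{s-1}}$, so a priori it could equal $\M^{s-1}$ while later terms grow.

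The key idea resolving this is to exploit that the chain one level higher is trivial. Since $\widetilde{\M^s} = \M^s$, Lemma \ref{chain} forces every term $\M^{s+k} : \M^k$ (for $k \ge 0$) to equal $\M^s$, being squeezed between $\M^s$ and $\widetilde{\M^s} = \M^s$. Consequently, for each $j \ge 1$ we have $\M^{s-1+j} : \M^{j-1} = \M^s$ (the case $k = j-1$), so that $\M^{s-1+j} : \M^j = (\M^{s-1+j} : \M^{j-1}) : \M = \M^s : \M$; thus the chain computing $\widetilde{\M^{s-1}}$ is constant equal to $\M^s : \M$. Therefore $\M^s : \M = \widetilde{\M^{s-1}} \supsetneq \M^{s-1}$, so $I\M^{s-2}$ is not weakly $\M$-full, giving $n_3(I) \ge \alpha$. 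Combining with $n_3(I) \le \alpha$ and Proposition \ref{propdesigualdade} completes the proof.
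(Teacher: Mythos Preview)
Your proof is correct and follows essentially the same strategy as the paper: reduce to $n_3(I)=\alpha$ via Proposition \ref{propdesigualdade}, obtain the upper bound from the chain of Lemma \ref{chain}, and handle the lower bound by a case split on which of $r_I(\M)$ and $s(\M)-1$ realizes $\alpha$. The only cosmetic difference is that the paper first isolates the general identity $\widetilde{\M^{n+1}}:\M=\widetilde{\M^{n}}$ for all $n\ge 0$ and then invokes it in each case, whereas you derive the single instance $\M^{s}:\M=\widetilde{\M^{s-1}}$ directly at the point it is needed; the underlying computation with Lemma \ref{chain} is identical.
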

\begin{proof} We know from Proposition \ref{propdesigualdade} that $n_2(I) \leq n_3(I)  = n_1(I)$ and so, to conclude, we will show the equality $n_3(I) = \alpha$. First, let us prove $n_3(I) \leq \alpha$. We claim that
\begin{equation}\label{claim} \widetilde{\M^{n + 1}} : \M = \widetilde{\M^{n}} \quad \mbox{for \,all} \quad n\geq 0.
\end{equation} To show this, applying  Lemma \ref{chain} we obtain, for  all $j\gg 0$, $$\widetilde{{\M}^{n}}  = {\M}^{n + j + 1} : {\M}^{j + 1} \quad \mbox{and} \quad \widetilde{{\M}^{n + 1}} = {\M}^{n + j + 1} : {\M}^{j}.$$  Therefore, we can write $$\widetilde{{\M}^{n + 1}} : {\M}  =  ({\M}^{n + j + 1} : {\M}^{j}) : \M  =  {\M}^{n + j + 1} : {\M}^{j + 1}  =  \widetilde{{\M}^{n}},$$ thus establishing (\ref{claim}), which will be freely used in the rest of the proof.

Now, if we take $j \geq \alpha$, then 
$$I\M^{j + 1} : \M =\M^{j + 2} : \M = \widetilde{\M^{j + 2}} : \M = \widetilde{\M^{j + 1}} = \M^{j + 1} = I\M^{j},$$ that is, $I\M^{j}$ is  weakly $\M$-full for all $j \geq \alpha$. This gives $n_{3}(I) \leq \alpha$; next, we show equality must hold.
\begin{itemize}
    \item {\bf Case 1.} First, assume  ${\rm r}_{I}(\M) > s(\M) - 1$. Then, ${\rm r}_{I}(\M) = \alpha$ and we have $$I\M^{\alpha - 1} \neq \M^{\alpha} = \widetilde{\M^{\alpha}} = \widetilde{\M^{\alpha + 1}} : \M  = \M^{\alpha + 1} : \M = I\M^{\alpha} : \M,$$ which shows that $I\M^{\alpha - 1}$ is not weakly $\M$-full, and consequently $n_{3}(I) \geq \alpha$. Since, $n_{3}(I) \leq \alpha$, equality must hold.

\medskip

\item {\bf Case 2.} Finally, if ${\rm r}_{I}(\M) \leq s(\M) - 1$, then $\alpha = s(\M) - 1$. We have shown $n_3(I) \leq \alpha$. Now suppose by way of contradiction that $n_{3}(I) \leq \alpha - 1$. Thus, $$I\M^{\alpha - 1} = I\M^{\alpha} : \M = \M^{\alpha + 1} : \M =  \widetilde{\M^{\alpha + 1}} : \M = \widetilde{\M^{\alpha}},$$ which yields $\widetilde{\M^{\alpha}} = I\M^{\alpha - 1} \subseteq \M^{\alpha }$, that is, $\widetilde{\M^{\alpha}} = \M^{\alpha}$. This is a contradiction, because $s(\M)= \alpha +1$. It follows that $n_3(I) = \alpha$, as asserted. \qed \end{itemize}
\end{proof}

\begin{Remark}\rm Under the hypotheses and notations of Theorem \ref{cotred}, we immediately derive that if $n_2(I) \geq \alpha$ then all the $n_i(I)$'s coincide (with $\alpha$).

\end{Remark}

A particular facet of our theorem is concerned with the interplay between $\M$-fullness (precisely, the number $n_1(I)$) and the operation of Ratliff-Rush closure (by means of $s(\M)$). Here it should be mentioned that other connections are known. Indeed, recall first that the associated graded ring
$$\mathcal{G}(\M)=\bigoplus_{i\geq 0}{\M}^i/{\M}^{i+1}$$
has positive depth if and only if all powers of $\M$ are $\M$-full (see, e.g., \cite[Proposition 3.3(i)]{Watanabe3}). On the other hand, under suitable conditions, the property $\mathrm{depth}\,\mathcal{G}(\M)>0$ is also equivalent to all powers of $\M$ being Ratliff-Rush closed (see \cite[Remark 1.6]{Rossi-Swanson}). In other words,  $n_1(\M) = 0$ if and only if $s(\M) = 1$. Now we point out that Theorem \ref{cotred} reveals this equivalence to be more general, as the following corollary shows.

\begin{Corollary} Let $(R, \M)$ be a local ring with infinite residue field and ${\rm depth}\,R>0$. Then, 
	$$n_2(\M) \leq n_3(\M)  = n_1(\M) = s(\M) - 1.$$
\end{Corollary}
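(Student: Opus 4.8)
The plan is to apply Theorem \ref{cotred} with the specific choice $I = \M$. The key observation is that $\M$ is trivially a reduction of itself, since $\M \cdot \M^r = \M^{r+1}$ holds for any $r \geq 0$; in fact the defining equation holds already for $r = 0$. Thus the hypotheses of Theorem \ref{cotred} are satisfied by $I = \M$ (recall $\M$ contains a regular element because $\mathrm{depth}\,R > 0$), and the theorem immediately yields
$$n_2(\M) \leq n_3(\M) = n_1(\M) = \alpha, \quad \text{where } \alpha = \mathrm{max}\{{\rm r}_\M(\M), s(\M) - 1\}.$$
So the entire task reduces to computing $\alpha$ in this degenerate case, i.e.\ to showing $\alpha = s(\M) - 1$.

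First I would compute ${\rm r}_\M(\M)$. By definition this is $\mathrm{min}\{m \in \mathbb{N} \mid \M \cdot \M^m = \M^{m+1}\}$, and since the equation $\M^{m+1} = \M^{m+1}$ holds identically for every $m \geq 0$, the minimum is attained at $m = 0$, giving ${\rm r}_\M(\M) = 0$. Then $\alpha = \mathrm{max}\{0, s(\M) - 1\}$. Here I would invoke the convention established just before Lemma \ref{chain} in the excerpt, namely that $s(\M) \geq 1$ throughout the paper (since $\widetilde{\M^0} = \M^0$ holds trivially, $s$ is normalized to be at least $1$). Consequently $s(\M) - 1 \geq 0$, so $\mathrm{max}\{0, s(\M)-1\} = s(\M) - 1$, and therefore $\alpha = s(\M) - 1$.

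Substituting this value of $\alpha$ back into the conclusion of Theorem \ref{cotred} gives exactly the asserted chain $n_2(\M) \leq n_3(\M) = n_1(\M) = s(\M) - 1$, completing the proof.

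I do not anticipate a genuine obstacle here, as this corollary is purely a specialization of the main theorem. The only point requiring a moment's care is the bookkeeping around the two competing quantities inside the maximum: one must verify that ${\rm r}_\M(\M)$ collapses to $0$ and then rely on the standing normalization $s(\M) \geq 1$ to discard it from the maximum. Everything else is a direct instantiation of Theorem \ref{cotred}.
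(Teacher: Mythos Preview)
Your proposal is correct and follows essentially the same approach as the paper's own proof: specialize Theorem \ref{cotred} to $I=\M$, observe that ${\rm r}_{\M}(\M)=0$, and conclude $\alpha = s(\M)-1$ using the standing convention $s(\M)\geq 1$. Your version is simply more detailed than the paper's two-sentence argument.
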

\begin{proof} Clearly, the ideal $\M$ is a reduction of itself with reduction number zero, and hence $\alpha = s(\M) - 1$. Now the case $I=\M$ of Theorem \ref{cotred} gives the result. \qed
\end{proof}

\medskip

Another byproduct of Theorem \ref{cotred} is the following curious fact in dimension 1; it shows that, in this case, the formula for $n_1(I)$ and $n_3(I)$ given in our theorem does not depend on $I$ itself.

\begin{Corollary} \label{corindep}
Let $(R, \M)$ be a $1$-dimensional Cohen-Macaulay local ring with infinite residue field. Then, for any minimal reduction $I$ of $\M$, $$n_2(I)\leq n_3(I) = n_1(I) = {\rm r}(\M).$$ 
\end{Corollary}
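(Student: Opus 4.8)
The plan is to deduce the statement from Theorem \ref{cotred} by identifying the invariant $\alpha$ with the absolute reduction number ${\rm r}(\M)$. Since $R$ is Cohen--Macaulay of dimension one we have ${\rm depth}\,R=1>0$, so Theorem \ref{cotred} applies verbatim and already gives $n_2(I)\le n_3(I)=n_1(I)=\alpha={\rm max}\{{\rm r}_I(\M),s(\M)-1\}$ for the given minimal reduction $I$. Moreover, because the fibre cone of $\M$ is exactly $\mathcal{G}(\M)$, the analytic spread of $\M$ equals $\dim R=1$, so with infinite residue field every minimal reduction is principal, say $I=(x)$, and the Cohen--Macaulay hypothesis forces the parameter $x$ to be a nonzerodivisor. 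It therefore remains to prove the two equalities $\alpha={\rm r}_I(\M)$ and ${\rm r}_I(\M)={\rm r}(\M)$.

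For $\alpha={\rm r}_I(\M)$, I would first show that $\widetilde{\M^{n}}=\M^{n}$ for every $n\ge {\rm r}_I(\M)$. Writing $r={\rm r}_I(\M)$, repeated use of $\M^{m+1}=x\M^{m}$ (valid for $m\ge r$) yields $\M^{m}=x^{\,m-r}\M^{r}$ for all $m\ge r$; feeding this into the colon ideals and cancelling the regular element $x^{k}$ gives $\M^{n+k}:x^{k}=\M^{n}$ for $n\ge r$ and $k\gg 0$. Since $x^{k}\in\M^{k}$, Lemma \ref{chain} then squeezes $\M^{n}\subseteq\widetilde{\M^{n}}=\M^{n+k}:\M^{k}\subseteq \M^{n+k}:x^{k}=\M^{n}$, so equality holds throughout and $\widetilde{\M^{n}}=\M^{n}$ for $n\ge r$. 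This forces $s(\M)-1\le {\rm r}_I(\M)$ and hence $\alpha={\rm r}_I(\M)$; the degenerate case $r=0$ (where $R$ is a DVR and $s(\M)=1$) is consistent with this.

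The heart of the matter, and the step I expect to be the main obstacle, is the independence ${\rm r}_I(\M)={\rm r}(\M)$: a priori $n_1(I)$ depends on $I$, so one must check that the reduction number is the same for every minimal reduction. I would establish the intrinsic formula ${\rm r}_I(\M)=1+{\rm max}\{n\ge 0 \mid \lambda(\M^{n}/\M^{n+1})\neq e(\M)\}$, where $e(\M)$ is the multiplicity and ${\rm max}\,\emptyset:=-1$. Indeed, multiplication by $x$ induces, for $n\ge r$, a surjection $\M^{n}/\M^{n+1}\to\M^{n+1}/\M^{n+2}$ which is injective because $x$ is a nonzerodivisor and $\M^{n+2}=x\M^{n+1}$; hence the Hilbert function of $\M$ is constant equal to $e(\M)$ for $n\ge r$. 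Conversely, if one had $\lambda(\M^{r-1}/\M^{r})=e(\M)$, the same map at level $r-1$ would be an isomorphism onto $\M^{r}/\M^{r+1}$, giving $x\M^{r-1}+\M^{r+1}=\M^{r}$ and therefore $\M^{r}=x\M^{r-1}$, which contradicts the minimality of $r={\rm r}_I(\M)$. Thus ${\rm r}_I(\M)$ is read off from the Hilbert function of $\M$ alone, an object independent of $I$, so ${\rm r}_I(\M)={\rm r}(\M)$ for every minimal reduction. Combining this with $\alpha={\rm r}_I(\M)$ and the chain $n_2(I)\le n_3(I)=n_1(I)=\alpha$ from Theorem \ref{cotred} completes the proof.
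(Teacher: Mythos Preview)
Your proof is correct and follows the same overall strategy as the paper: apply Theorem~\ref{cotred} and identify $\alpha$ with ${\rm r}(\M)$ by showing both $s(\M)-1\le {\rm r}_I(\M)$ and ${\rm r}_I(\M)={\rm r}(\M)$. The difference is only in how these two ingredients are obtained. The paper simply cites \cite[Proposition 4.2(i)]{Rossi-Swanson} for the first inequality and \cite[page 504]{Huckaba} for the independence of ${\rm r}(\M)$, whereas you supply direct, self-contained arguments for both (the colon-ideal computation with the regular generator $x$, and the Hilbert-function characterisation of ${\rm r}_I(\M)$). Your version is therefore more elementary and independent of the literature, at the cost of being longer; the paper's version is compact but relies on external references.
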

\begin{proof} First recall that the analytic spread of $\M$, denoted by $\ell (\M)$, is the Krull dimension of the associated graded ring $\mathcal{G}(\M)$. It is well-known that $I$ is minimally generated by $\ell(\M)$ elements. On the other hand, we have $$\ell(\M)={\rm dim}\,R=1,$$ and hence the minimal reduction $I$ is principal. In this case, ${\rm r}(\M)$ is independent (see
 \cite[page 504]{Huckaba}), so ${\rm r}_I(\M)={\rm r}(\M)$. On the other hand, using again that $I$ is principal, we can apply \cite[Proposition 4.2(i)]{Rossi-Swanson} to obtain $$s(\M) \leq {\rm r}(\M),$$ which yields $\alpha ={\rm r}(\M)$. Now the result follows by Theorem \ref{cotred}. \qed
\end{proof}

\medskip

Now let us emphasize that, in the case of a Cohen-Macaulay local ring $(R, \M)$ with infinite residue field and arbitrary positive dimension, we are not aware (after calculating numerous examples and searching the literature) of any example violating $$s(\M) \leq {\rm r}_I({\M}) + 1,$$ where $I$ is a minimal reduction of $\M$. Thus, supported by Theorem \ref{cotred}, and also inspired by the behaviour in dimension 1 provided by Corollary \ref{corindep}, we can propose the following conjecture.

\begin{Conjecture}\label{ourconject} Let $(R, \M)$ be a Cohen-Macaulay local ring with infinite residue field and ${\rm dim}\,R\geq 2$. Then, for any minimal reduction $I$ of $\M$, $$n_3(I) = {\rm r}_I(\M).$$ 
    
\end{Conjecture}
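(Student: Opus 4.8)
The plan is to reduce the conjecture to a purely numerical inequality and then attack that inequality directly. By Theorem \ref{cotred} we already know that, for a minimal reduction $I$ of $\M$, one has $n_3(I) = \alpha = \mathrm{max}\{{\rm r}_I(\M),\, s(\M) - 1\}$. Since the term ${\rm r}_I(\M)$ always appears in this maximum, we have $n_3(I) = {\rm r}_I(\M)$ if and only if $s(\M) - 1 \leq {\rm r}_I(\M)$. Thus the conjecture is \emph{equivalent} to the inequality $s(\M) \leq {\rm r}_I(\M) + 1$ --- precisely the bound observed to hold in all computed examples. The whole problem therefore becomes: for a Cohen--Macaulay local ring of dimension at least $2$ with minimal reduction $I$ of $\M$, show that $\widetilde{\M^{\,i}} = \M^{\,i}$ for every $i \geq {\rm r}_I(\M) + 1$, which is exactly the assertion $s(\M) \leq {\rm r}_I(\M)+1$.

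To prove this, set $r = {\rm r}_I(\M)$, so that $\M^{\,n} = I^{\,n - r}\M^{\,r}$ for all $n \geq r$, and recall that because $R$ is Cohen--Macaulay and $I$ is a parameter ideal, $I$ is generated by a regular sequence; hence the associated graded ring $\mathcal{G}(I)$ has positive depth and all powers of $I$ are Ratliff--Rush closed. Fix $i \geq r+1$. By Lemma \ref{chain}, $\widetilde{\M^{\,i}}$ is the stable value of the chain $\M^{\,i+k} : \M^{\,k}$, and since $\M^{\,i} \subseteq \M^{\,i+k} : \M^{\,k}$ always, it suffices to prove the reverse inclusion $\M^{\,i+k} : \M^{\,k} \subseteq \M^{\,i}$ for $k \gg 0$. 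In the one--dimensional (principal) case this is immediate and reproves the bound of \cite[Proposition 4.2(i)]{Rossi-Swanson} used in Corollary \ref{corindep}: writing $I = (x)$ with $x$ a nonzerodivisor, an element $z$ with $z\M^{\,k} \subseteq \M^{\,i+k} = x^{\,i+k-r}\M^{\,r}$ satisfies $zx^{\,k} = x^{\,i+k-r}w$ for some $w \in \M^{\,r}$, and cancelling the nonzerodivisor $x^{\,k}$ gives $z = x^{\,i-r}w \in \M^{\,i}$. The goal is to find a substitute for this cancellation that survives in higher dimension.

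Here lies the main obstacle. For $d = \dim R \geq 2$ the reduction $I = (x_1, \dots, x_d)$ is no longer principal, and the only information one extracts from $z\M^{\,k} \subseteq \M^{\,i+k} = I^{\,i-r}\,I^{\,k}\M^{\,r}$ by a determinantal (integral--dependence) argument is that $z$ lies in the \emph{integral closure} $\overline{\M^{\,i}}$, which is in general strictly larger than $\M^{\,i}$; thus naive cancellation of the non--principal ideal $I^{\,k}$ only recovers the already--known inclusion $\widetilde{\M^{\,i}} \subseteq \overline{\M^{\,i}}$. What is really needed is finer homological control of the filtration $\{\M^{\,n}\}$ over the Rees algebra of $I$: concretely, one should translate $s(\M) - 1$ (up to the standard degree normalization) into the top nonvanishing degree of the $\mathcal{G}(\M)_+$--torsion submodule $\mathrm{H}^{0}_{\mathcal{G}(\M)_+}(\mathcal{G}(\M))$, whose graded pieces measure exactly the failure of Ratliff--Rush closedness (so that $\mathcal{G}(\M)$ has positive depth precisely when $s(\M) = 1$), and then bound that top degree by $r = {\rm r}_I(\M)$ using the structure of the Sally module of $\M$ with respect to $I$. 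I expect the decisive difficulty to be precisely this degree estimate for $\mathrm{H}^{0}_{\mathcal{G}(\M)_+}(\mathcal{G}(\M))$ in dimension $\geq 2$, where $\mathcal{G}(\M)$ may have depth zero; the dimension--one argument above offers no control here, which is exactly why the statement remains conjectural.
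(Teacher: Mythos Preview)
The statement you were asked to prove is a \emph{conjecture} in the paper; there is no proof in the paper to compare against. The authors explicitly say, just before stating it, that they are ``not aware \ldots\ of any example violating $s(\M) \leq {\rm r}_I(\M) + 1$'' and that this inequality, together with Theorem \ref{cotred}, is what supports the conjecture. Your reduction of the conjecture to exactly this inequality is therefore correct and is precisely the reformulation the paper has in mind.

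However, your write-up is not a proof, and you yourself say so in the last sentence. The dimension-one cancellation argument you give is fine (and, as you note, essentially reproduces the Rossi--Swanson bound behind Corollary \ref{corindep}), but in dimension $\geq 2$ you only obtain $\widetilde{\M^{\,i}} \subseteq \overline{\M^{\,i}}$, which is already known and does not yield $\widetilde{\M^{\,i}} = \M^{\,i}$. The suggested translation into a degree bound for $\mathrm{H}^{0}_{\mathcal{G}(\M)_+}(\mathcal{G}(\M))$ is a reasonable heuristic, but you supply no argument for that bound; this is exactly the missing step, and the paper offers nothing beyond the same observation. In short: your reduction is right and matches the paper's viewpoint, but neither you nor the paper has a proof, so the conjecture remains open.
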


\subsection{Approach via Castelnuovo-Mumford regularity} For the next corollaries of Theorem \ref{cotred} we need to invoke one more crucial invariant. Let $S = \bigoplus_{n \geq 0}S_{n}$ be a finitely generated standard graded algebra over a ring $S_{0}$ (as usual, by {\it standard} we mean that $S$ is generated by $S_1$ as an $S_0$-algebra) and write $S_{+} = \bigoplus_{n \geq 1}S_{n}$ for the ideal generated by all elements of $S$ of positive degree. For a graded $S$-module $A=\bigoplus_{n \in {\mathbb Z}}A_{n}$ satisfying $A_n=0$ for all $n\gg 0$, we let $$a(A) = \textrm{max}\{n\in {\mathbb Z} \ | \ A_{n} \neq 0\}$$ if $A\neq 0$, and $a(A)=-\infty$ if $A=0$. Now, given an integer $j\geq 0$, let $H_{S_{+}}^{j}(S)$ stand for the $j$-th local cohomology module of $S$ with respect to $S_{+}$. It is well-known that $H_{S_{+}}^{j}(S)$ is a graded module with $H_{S_{+}}^{j}(S)_n=0$ for all $n\gg 0$ (see, e.g., \cite[Proposition 15.1.5(ii)]{B-S}). Thus, $a(H_{S_{+}}^{j}(S))<+\infty$.

\begin{Definition}\rm Maintain the above setting and notations. The \textit{Castelnuovo-Mumford regularity} of the graded ring $S$ is defined as
$$\mathrm{reg}\,S  :=  \mathrm{max}\{a(H_{S_{+}}^{j}(S)) + j \, \mid \, j \geq 0\}.$$
\end{Definition}

A classical instance is when $S=\mathcal{R}(J) = \bigoplus_{i \geq 0} J^{i}$, the Rees algebra of an ideal $J$ in a ring $R$ (recall $\mathcal{R}(J)$ is a finitely generated standard graded $R$-algebra). In particular, we can consider the case where $R$ is local and $J=\M$, the maximal ideal of $R$. Our result is as follows.

\begin{Theorem}\label{cororeg}
	Let $(R, \M)$ be a local ring with infinite residue field and ${\rm depth}\,R>0$, and let $I$ be a reduction of $\M$. Then, 
	$$n_2(I) \leq n_3(I)  = n_1(I) \leq \mathrm{reg}\,\mathcal{R}(\M).$$
	
\end{Theorem}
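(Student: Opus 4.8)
The plan is to reduce this statement to the already-proven Theorem \ref{cotred}, which gives us the exact equality $n_3(I) = n_1(I) = \alpha$ with $\alpha = \mathrm{max}\{{\rm r}_{I}(\M), s(\M) - 1\}$. Since the inequality $n_2(I) \leq n_3(I) = n_1(I)$ is already established, the entire task collapses to proving the single inequality
$$\alpha \leq \mathrm{reg}\,\mathcal{R}(\M).$$
In other words, I must bound both ${\rm r}_{I}(\M)$ and $s(\M) - 1$ from above by the Castelnuovo-Mumford regularity of the Rees algebra of $\M$.

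First I would handle the reduction-number piece. The regularity of the Rees algebra is a classical and well-studied control on reduction numbers: it is a standard fact (via the theory relating regularity to the vanishing of graded components of local cohomology, or equivalently via the $a$-invariants of $\mathcal{R}(\M)$ and the associated graded ring) that for any minimal reduction $I$ of $\M$ one has ${\rm r}_{I}(\M) \leq \mathrm{reg}\,\mathcal{R}(\M)$; more generally this holds for any reduction generated by a system expressible inside the regularity bound, and since $I$ is a reduction of $\M$ the stabilization $I\M^{r} = \M^{r+1}$ for $r = \mathrm{reg}\,\mathcal{R}(\M)$ can be read off from the vanishing $H^{j}_{\mathcal{R}(\M)_+}(\mathcal{R}(\M))_n = 0$ in the relevant degrees. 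So I would cite the appropriate result (in the spirit of Trung's or Ooishi's characterizations linking $\mathrm{reg}$ to reduction numbers) to get ${\rm r}_{I}(\M) \leq \mathrm{reg}\,\mathcal{R}(\M)$.

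Next I would bound $s(\M) - 1$. Here the key is that the Ratliff-Rush stabilization index is also governed by the regularity of the Rees algebra: since $s(\M)$ measures when $\widetilde{\M^i} = \M^i$ for all large $i$, and by Lemma \ref{chain} the Ratliff-Rush closure $\widetilde{\M^n}$ is the stable value of the chain $\M^{n+1}:\M \subseteq \M^{n+2}:\M^2 \subseteq \cdots$, the degree at which all these colon-stabilizations occur is again controlled by the top-degree vanishing of the local cohomology of $\mathcal{R}(\M)$. Concretely, for $i$ exceeding the regularity one expects $\widetilde{\M^i} = \M^i$, giving $s(\M) - 1 \leq \mathrm{reg}\,\mathcal{R}(\M)$. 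Combining the two bounds yields $\alpha \leq \mathrm{reg}\,\mathcal{R}(\M)$, and the theorem follows at once from Theorem \ref{cotred}.

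The main obstacle I anticipate is matching the precise indexing conventions: the regularity is defined here as $\mathrm{max}\{a(H^{j}_{S_+}(S)) + j\}$, and I must make sure the off-by-one shifts in both ${\rm r}_{I}(\M)$ (where stabilization at power $r$ corresponds to $IJ^r = J^{r+1}$) and in $s(\M) - 1$ (recall the paper fixes $s(\M) \geq 1$) line up cleanly with the degree in which the graded local cohomology vanishes. Getting these shifts exactly right — rather than losing a unit and weakening the bound — is where the care is needed; the conceptual content is entirely the two classical comparisons, so once the bookkeeping is settled the proof is short.
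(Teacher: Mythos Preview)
Your proposal is correct and follows the same route as the paper: reduce to Theorem \ref{cotred}, then bound both ${\rm r}_{I}(\M)$ and $s(\M)-1$ above by $\mathrm{reg}\,\mathcal{R}(\M)$ via standard results in the literature. The paper simply cites \cite[Theorem 4.8]{Trung} for the reduction-number bound and \cite[Theorem 2.1(ii)]{Rossi-Dinh-Trung} for $\max\{\mathrm{reg}\,\mathcal{R}(\M),1\}\geq s(\M)$, so your worries about off-by-one shifts are already handled there; note also that since any reduction $I$ contains a minimal one, the bound ${\rm r}_{I}(\M)\leq\mathrm{reg}\,\mathcal{R}(\M)$ for arbitrary reductions follows immediately from the minimal case.
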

\begin{proof} First, applying \cite[Theorem 4.8]{Trung} we obtain $\mathrm{reg}\,\mathcal{R}(\M) \geq {\rm r}_{I}(\M)$. Furthermore, according to \cite[Theorem 2.1(ii)]{Rossi-Dinh-Trung} we can write
$$\mathrm{max}\{\mathrm{reg}\,\mathcal{R}(\M), 1\} \geq s(\M).$$ Putting these facts together, we obtain $$\mathrm{reg}\,\mathcal{R}(\M) \geq \mathrm{max}\{s(\M) - 1, {\rm r}_{I}(\M)\}=\alpha.$$ Now the result follows from Theorem \ref{cotred}. \qed
\end{proof}

\medskip

We now pass to our last corollary. It follows from Example \ref{examplesec1} that, if $(R, \M)$ is a regular local ring, then the $n_i(\M)$'s coincide. The result below (which requires the residue field to be infinite) tells us, in particular, what this common value must be.

\begin{Corollary}\label{regular} Let $(R, \M)$ be a regular local ring with infinite residue field, and let $I$ be a reduction of $\M$. Then, $$n_1(I) = n_2(I) = n_3(I) = 0.$$ In particular, $n_1(\M) = n_2(\M) = n_3(\M) = 0$.
\end{Corollary}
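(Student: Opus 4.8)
The plan is to deduce Corollary \ref{regular} directly from Theorem \ref{cororeg}, using the fact that for a regular local ring the Rees algebra $\mathcal{R}(\M)$ is particularly well-behaved. By Theorem \ref{cororeg} we already have
$$n_2(I) \leq n_3(I) = n_1(I) \leq \mathrm{reg}\,\mathcal{R}(\M),$$
so it suffices to show $\mathrm{reg}\,\mathcal{R}(\M) = 0$; since all the $n_i(I)$'s are non-negative by definition, this forces each of them to be zero.

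First I would recall that a regular local ring $(R,\M)$ of dimension $d$ has $\M$ generated by a regular system of parameters $x_1, \dots, x_d$, and these form a regular sequence. The key structural fact is that in this situation the Rees algebra $\mathcal{R}(\M) = \bigoplus_{i \geq 0}\M^i$ has an explicit, clean description: the powers $\M^i$ behave as freely as possible, and in particular the associated graded ring $\mathcal{G}(\M) = \bigoplus_{i \geq 0}\M^i/\M^{i+1}$ is a polynomial ring over the residue field $R/\M$ in $d$ variables. The plan is to use this to pin down the regularity. One concrete route: since $\M$ is generated by a regular sequence, $\mathcal{R}(\M)$ is Cohen-Macaulay and its defining equations over the polynomial ring $R[T_1,\dots,T_d]$ are the $2\times 2$ minors arising from the Koszul relations, which are linear; this linearity of the presentation is exactly what drives the regularity down to zero.

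The cleanest way to conclude is to compute $\mathrm{reg}\,\mathcal{R}(\M)$ via its defining ingredients in Theorem \ref{cororeg}'s proof, namely through $\mathrm{r}_I(\M)$ and $s(\M)$. Indeed, when $R$ is regular, $\M$ is generated by a regular sequence, so every power $\M^i$ is integrally closed (being a complete intersection power), and hence Ratliff-Rush closed; this gives $\widetilde{\M^i} = \M^i$ for all $i \geq 1$, so that $s(\M) = 1$. Moreover, a minimal reduction $I$ of $\M$ is generated by a full regular sequence of $d = \dim R$ general linear combinations of the $x_j$, and one checks $I\M^0 = I = \M$ is impossible unless $I = \M$; more carefully, since $\mathcal{G}(\M)$ is a polynomial ring, the reduction number $\mathrm{r}_I(\M)$ of $\M$ with respect to any minimal reduction is $0$. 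Thus $\alpha = \max\{\mathrm{r}_I(\M), s(\M) - 1\} = \max\{0, 0\} = 0$, and Theorem \ref{cotred} directly yields $n_3(I) = n_1(I) = 0$, whence $n_2(I) = 0$ as well by the sandwiching inequality $0 \leq n_2(I) \leq n_3(I) = 0$. The final assertion is the special case $I = \M$.

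I expect the main obstacle to be verifying that $\mathrm{r}_I(\M) = 0$ for a minimal reduction $I$ when $\dim R \geq 1$: one must argue that a general minimal reduction $I$ of $\M$ already satisfies $I\M^0 = \M$, i.e., that $I = \M$, which can only happen when the reduction is all of $\M$. The subtlety is that a \emph{proper} minimal reduction $I \subsetneq \M$ would force $\mathrm{r}_I(\M) \geq 1$, so the statement hinges on understanding which reductions are being allowed and showing that the formula $\alpha = 0$ is compatible with $s(\M) = 1$. The safe and self-contained route is therefore to lean on $s(\M) = 1$ (from integral-closedness of powers in the regular case) combined with Example \ref{examplesec1}, which already establishes $n_1(\M) = n_2(\M) = n_3(\M)$ via \cite{Hong}; the corollary just identifies this common value as $0$, and the cleanest identification is through $\mathrm{reg}\,\mathcal{R}(\M) = 0$, a standard fact for regular local rings that bounds everything from above.
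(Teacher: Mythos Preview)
Your opening plan---show $\mathrm{reg}\,\mathcal{R}(\M)=0$ and invoke Theorem~\ref{cororeg}---is exactly what the paper does, citing \cite[Corollary~5.2]{Trung} for the vanishing of the regularity when $\M$ is generated by a regular sequence. That two-line argument is the entire proof; everything after your first paragraph is unnecessary.

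Your detour through $\alpha$ and Theorem~\ref{cotred} would also work and is arguably more elementary (it bypasses the Castelnuovo--Mumford machinery), but the confusion you flag in your last paragraph rests on a misconception: in a regular local ring there are \emph{no} proper reductions of $\M$. Indeed, any reduction $I\subseteq\M$ must have its image spanning all of $\M/\M^2$---otherwise the initial forms of its generators in $\mathcal{G}(\M)\cong k[x_1,\dots,x_d]$ generate an ideal of height strictly less than $d$, contradicting the reduction condition---so $I+\M^2=\M$ and Nakayama forces $I=\M$. Hence $\mathrm{r}_I(\M)=0$ automatically for every reduction $I$, your ``obstacle'' is vacuous, and $\alpha=\max\{0,\,s(\M)-1\}=0$ follows at once (your argument for $s(\M)=1$ is fine). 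If you want to present this alternative route, state and use $I=\M$ cleanly rather than hedging about hypothetical proper minimal reductions that do not exist.
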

\begin{proof} We may assume ${\rm depth}\,R>0$ (otherwise $R$ is just a field). Now, since in the present situation $\M$ is generated by a regular sequence, we have $\mathrm{reg}\,\mathcal{R}(\M) = 0$ (see \cite[Corollary 5.2]{Trung}). Finally, we apply Theorem \ref{cororeg}. \qed
\end{proof}

\begin{Remark}\rm Our Corollary \ref{regular} admits a potentially more general statement, to wit, if $(R, \M)$ is a local ring with infinite residue field such that $\M$ can be generated by a $d$-sequence (in the sense of \cite{Hu}), then likewise we can write $n_1(I) = n_2(I) = n_3(I) = 0$ for any reduction $I$ of $\M$. Indeed, in this case we still have $$\mathrm{reg}\,\mathcal{R}(\M) = 0$$ by \cite[Corollary 5.2]{Trung}. However, we do not believe (although we found neither a proof nor an answer in the literature) that $\M$ can be generated by a $d$-sequence when $R$ is non-regular.
\end{Remark}

\section{Examples}

We conclude this note by  illustrating  our  main results. 

\begin{Example}\label{illust} \rm Fix integers $a, b, c \geq 2$, and consider the 2-dimensional local ring 
$$R =  {\mathbb C}[\![x, y, z, t]\!]/(xy - t^{a+b},\, xz - t^{a+c} + zt^a,\, yz - yt^c+zt^b),$$
which defines a rational triple point, hence is Cohen-Macaulay. Denote $\M  =  (x, y, z, t)R$ and recall from \cite[Example 7.5]{Goto} that $I = (x + y + z, t)R$  is a minimal reduction of $\M$ with 
$$\M^2 = (x^2, xt, y^2, yt, z^2, zt, t^2)R = I\M,$$ that is, $r_I(\M) = 1$. By \cite[(1.3), p.\,595]{Heinzer-Lantz-Shah} we have $s(\M) = 1$. In addition, from the defining relations of $R$ and the inclusion ${\M}^2\subset I$, it is easy to see that $I\colon z=\M=I\colon \M$. Putting these facts together, we can apply Theorem \ref{cotred} to obtain $$n_2(I)=0<  n_3(I)  = n_1(I) = 1.$$
\end{Example}

\medskip

In the following examples, $K$ denotes an infinite field. We next proceed to show, in particular, that Corollary \ref{corindep} only applies to minimal reductions.

\begin{Example} \rm
Let $R = K[\![x, y, z]\!]/(y^3 - xz, x^4 - yz, x^3y^2 - z^2)$, which is a 1-dimensional Cohen-Macaulay local ring. Let $\M = (x, y, z)R$. Some of the aspects considered in this example (e.g., the principal minimal reduction  and the Ratliff-Rush powers of $\M$) are taken from \cite[Example 4.3]{Rossi-Swanson}, where $R$ is taken as $K[\![t^4, t^5, t^{11}]\!]$ for an indeterminate $t$ over $K$. The principal ideal $I = (x)R$ is a minimal reduction of $\M$; more precisely, $\M^{n + 1} = x\M^{n}$ for all $n \geq 3$, whereas $$\M^3 = (x^3, x^2y, xy^2, y^3)R \neq (x^3, x^2y, xy^2)R = x\M^2,$$ and hence ${\rm r}_I(\M) = 3$. By Corollary \ref{corindep} (and the independence of ${\rm r}(\M)$ used in its proof), we obtain $n_2(J)\leq n_1(J) = n_3(J) = {\rm r}(\M)=3$ for every minimal reduction $J$ of $\M$.  In particular, this holds for $I$. A computation also shows $n_2(I)=3$. Therefore,
$$n_2(I)= n_3(I) = n_1(I) = 3.$$

Next, we show that we can have $n_1(I) \neq n_1(L)$ for some non-minimal reduction $L$ of $\M$ (thus confirming that Corollary \ref{corindep} only works for minimal reductions). Note the ideal $L = (x, y)R$ is a reduction of $\M$ such that $\M^2 = (x^2, xy, y^2) = L\M$, i.e., ${\rm r}_L(\M) = 1$. 

Now, as recalled in the proof of Corollary \ref{corindep} we have $s(\M) \leq {\rm r}(\M)$, hence $s(\M)\leq 3$. On the other hand, we can compute  $$\M^2 = (x^2, xy, y^2) \neq (x^2, xy, y^2, z) = \widetilde{\M^2},$$ which gives $s(\M)\geq 3$, and then necessarily $s(\M)=3$. It follows that $s(\M) = 3 > 1 = r_L(\M)$. Finally, applying Theorem \ref{cotred} to the reduction $L$, we obtain $$n_1(L) = s(\M) - 1 =  2 < 3=n_1(I).$$
\end{Example}

\begin{Example} \rm
Let $R = K[\![x, y, z, u, v]\!]/(x^2 + y^5, xy + u^4, xz + v^3)$, which is a $2$-dimensional complete intersection local ring. Consider its maximal ideal $\M = (x, y, z, u, v)R$. According to \cite[Example 3.9]{Strunk}, we can write $$\mathrm{reg}\,\mathcal{R}(\M)=\mathrm{reg}\,\mathcal{G}(\M) = 8,$$ where the first equality is well-known (see \cite[Corollary 3.3]{Trung}). Therefore, for every -- not necessarily minimal -- reduction $I$ of $\M$, Theorem \ref{cororeg} yields $n_2(I) \leq n_3(I)  = n_1(I) \leq 8$ (we have been unable to check whether this bound can be attained). Computations in the particular case $I=\M$ give precisely $$n_2(\M) =n_3(\M) = n_1(\M) =7.$$  \end{Example}

\bigskip

\noindent{\bf Declaration of competing interest.} 
There is no competing interest.

\bigskip

\noindent{\bf Acknowledgements.} The first author was partially supported by the CNPq-Brazil grants 301029/2019-9 and 406377/2021-9. The second author was supported by a CAPES Doctoral Scholarship.

\end{document}